\def\a{\alpha}       \def\b{\beta}        \def\g{\gamma}
       \def\la{\lambda}     \def\om{\omega}
\def\D{{\mathbb D}}  \def\T{{\mathbb T}}
\def\C{{\mathbb C}}  
\def\R{{\mathbb R}}
\def\({\left(}       \def\){\right)}
\newtheorem{prop}{\sc Proposition}
\newtheorem{thm}[prop]{\sc Theorem}
\begin{document}
%%% Title
\title[Univalent polynomials with critical points on the circle]{On  univalent polynomials with critical points on the unit circle}
%%% Information for the first author
\author[M.J. Mart\'{\i}n]{Mar\'{\i}a J. Mart\'{\i}n}
\address{University of Eastern Finland, Department of Physics and Mathematics, P.O. Box 111, 80101 Joensuu, Finland}
\email{maria.martin@uef.fi}
%%% Information for the second author
\author[D. Vukoti\'c]{Dragan Vukoti\'c}
\address{Departamento de Matem\'aticas, Universidad Aut\'onoma de
Madrid, 28049 Madrid, Spain} \email{dragan.vukotic@uam.es}
\urladdr{http://www.uam.es/dragan.vukotic}
%\dedicatory{}
\thanks{The authors are supported by MTM2015-65792-P from MINECO/FEDER and partially by the Thematic Research Network MTM2015-69323-REDT, MINECO, Spain.}
%%% General info
\subjclass[2010]{30C10, 30C45}
\date{25 June, 2017.}
\keywords{Polynomials, critical points, starlike functions, functions with positive real part, Fej\'er lemma.}
%%%%%%%%%%%%%%%%%%%%%%%%%%%
%%% Abstract
%%%%%%%%%%%%%%%%%%%%%%%%%%%
\begin{abstract}
Brannan showed that a normalized univalent polynomial of the form $P(z)=z+a_2 z^2+\ldots + a_{n-1}z^{n-1}+\frac{z^n}{n}$ is starlike if and only if $a_2=\ldots=a_{n-1}=0$. We give a new and simple proof of his result, showing further that it is also equivalent to the membership of $P$ in the Noshiro-Warschawski class of univalent functions whose derivative has positive real part in the disk. Both proofs are based on the Fej\'er lemma for trigonometric polynomials with positive real part.
\end{abstract}
\maketitle
%%%%%%%%%%%%%%%%%%%%%%%%%%%
\section*{Introduction}
 \label{sec-intro}
%%%%%%%%%%%%%%%%%%%%%%%%%%%
\par
%%%%%%%%%%%%%%%%%%%%%%%%%%%
Let $\D$ denote the unit disk in the complex plane and $S$ the class of all normalized univalent (that is, analytic and one-to-one) functions in $\D$ with the Taylor series $f(z)=z+\sum_{n=2}^\infty a_n z^n$. This class and its several natural subclasses have been extensively studied in the literature \cite{Du}, \cite{P}.
\par
A basic step in understanding  the class $S$ is the study of the polynomials in the class. Dieudonn\'e \cite{Di} characterized the univalence of a polynomial in terms of the roots of an associated trigonometric equation, which is useful for applications but not explicit. Various authors have afterwards sought a simple explicit characterization of the coefficient regions of univalent polynomials, without involving any additional parameters. In such a study,  Dieudonn\'e's result is often used as a starting point; see \cite[\S~8.6]{Du}. However, this has so far only been done for the polynomials of very small degree; the case $n=4$ is already quite complicated.
\par
If a normalized polynomial $P(z)=z+a_2 z^2+\ldots + a_{n-1}z^{n-1}+a_n z^n$ is locally univalent in the unit disk then it is easy to see that $|a_n|\le \frac1{n}$ since its derivative cannot vanish in $\D$; to this end, just write $P^\prime (z)=n a_n \prod_{k=1}^{n-1} (z-\a_k)$. In this note we will be interested mainly in the polynomials with the maximum modulus of the leading coefficient: $|a_n|=\frac1{n}$. It is not difficult to see that such a polynomial has all of its critical points on the unit circle. Conversely, it is trivial to see that every polynomial with critical points on the unit circle and normalized so that $P^\prime (0)=1$ must satisfy the condition $|a_n|=\frac1{n}$. Since the polynomial $P$ shares many properties (including those we are interested in here) with any of its rotations: $P_\la(z)=\overline{\la} P(\la z)$, where $|\la|=1$, it suffices to  consider only the case when $a_n=1/n$.
\par
An analytic function in a convex domain whose derivative has positive real part is univalent, as expressed by the Noshiro-Warschaw\-ski criterion for univalence \cite[Theorem~2.16]{Du}; in view of this, the normalized class of all functions in $S$ with $\mathrm{Re} f^\prime (z) > 0$ in $\D$ is often called the Noshiro-Warschawski class. In this note we show that a normalized univalent polynomial with $|a_n|= \frac1{n}$ belongs to the Noshiro-Warschaw\-ski class if and only if $a_2=\ldots=a_{n-1}=0$. The key tool in the proof will be the classical Fej\'er lemma for trigonometric polynomials with positive real part on the circle.
\par
Another important subclass of $S$ is that of the starlike functions. A set $E$ is said to be \textit{starlike\/} with respect to the origin if for every $z\in E$ the segment $[0,z]$ is contained in $E$. A function $f$ is said to be starlike if it is a univalent function of the disk onto a domain starlike with respect to the origin. The usual notation for the subclass of $S$ consisting of all starlike functions is $S^*$. It is well-known \cite[Theorem~2.10]{Du} that an analytic function $f$ in $\D$, normalized so that $f(0)=f^\prime(0)-1=0$, belongs to $S^*$ if and only if $\mathrm{Re\,} \frac{z f^\prime(z)}{f(z)}>0$ for all $z\in\D$. For the normalized polynomials with $a_n=\frac1{n}$, Brannan \cite{Br} showed that a polynomial of this form is starlike if and only if $a_2=\ldots=a_{n-1}=0$ by relying on the criterion of Dieudonn\'e for univalence. We will give a different proof of Brannan's result, again by using Fej\'er's lemma instead of Dieudonn\'e's criterion.
%%%%%%%%%%%%%%%%%%%%%
%%%%%%%%%%%%%%%%%%%%%%%%%%%
\section{Preliminary facts on polynomials}
 \label{sec-prelim}
%%%%%%%%%%%%%%%%%%%%%%%%%%%
\par
\textbf{Some basic facts about polynomials}. Given a complex polynomial of degree $n$: $P(z)=\sum_{k=0}^n c_k z^k$, if we look at its restriction to the unit circle and write each $z$ of modulus one as $z=e^{i t}$, $t\in [0,2\pi]$, it is easy to see that Re\,$P$ is a trigonometric polynomial of degree $n$:
\begin{equation}
 T(t)=\a_0+\sum_{k=1}^n (\a_k \cos k t + \b_k \sin k t)	\,.
 \label{trig-pol}
\end{equation}
It is not difficult to see that it can have at most $2n$ zeros in $[0,2\pi]$. Either from this or from the solution to the Dirichlet problem for the disk with continuous data on the unit circle, we deduce  the following.
\smallskip\par\noindent
{\sc Fact}.
\textit{If the real part of a complex polynomial $P$ vanishes on the unit circle then $P$ is identically equal to a purely imaginary constant.}
\par\medskip
\textbf{The Fej\'er lemma}. The following classical lemma due to Fej\'er \cite[p.~154--155]{Ts} characterizes an important class of
trigonometric polynomials.
%%%%%%%%%%%%%%%%%%%%%
\smallskip\par\noindent
{\sc Fej\'er's Lemma}. \textit{If $T$ is a trigonometric polynomial as in \eqref{trig-pol} and $T(t)\ge 0$ for all $t\in [0,2\pi]$ then there exist complex coefficients $\g_j$, $0\le j\le n$, such that}
$$
 T(t)=|\g_0+\g_1 e^{i t} +\ldots + \g_n e^{i n t}|^2\,, \quad t\in [0,2\pi]\,.
$$
%%%%%%%%%%%%%%%%%%%%%
\par\medskip
\textbf{Another useful property}. Here is a useful fact. The argument is adapted from the proof of the main theorem in our recent work on a different topic \cite{MSUV}.
%%%%%%%%%%%%%%%%%%%%%
%% Proposition.
%%%%%%%%%%%%%%%%%%%%%
\begin{prop}
 \label{prop-pos}
If $n\ge 2$ and the polynomial
$$
 Q(z)=1+c_1 z+c_2 z^2 + \ldots + c_{n-1} z^{n-1} + z^n
$$ has positive real part in \,$\D$ then $c_1=c_2=\ldots=c_{n-1}=0$.
\end{prop}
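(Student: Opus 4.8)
The plan is to look at $Q$ on the unit circle and apply Fej\'er's lemma. Since $\mathrm{Re}\,Q$ is continuous on $\overline{\D}$ and strictly positive on $\D$, it is nonnegative on the circle, so $T(t):=\mathrm{Re}\,Q(e^{it})$ is a nonnegative trigonometric polynomial of the form \eqref{trig-pol}. Writing $z=e^{it}$ and using $\bar z=1/z$, we have $T(t)=\tfrac12\big(Q(e^{it})+\overline{Q(e^{it})}\big)=\tfrac12\sum_{k=0}^n\big(c_k e^{ikt}+\overline{c_k}\,e^{-ikt}\big)$ (with $c_0=c_n=1$); hence the coefficient of $e^{int}$ equals $\tfrac12 c_n=\tfrac12\neq0$, so $T$ has degree exactly $n$, the constant Fourier coefficient of $T$ is $\mathrm{Re}\,c_0=1$, and for $1\le k\le n-1$ the coefficient of $e^{ikt}$ is $\tfrac12 c_k$.

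By Fej\'er's lemma there is $g(z)=\gamma_0+\gamma_1 z+\ldots+\gamma_n z^n$ with $T(t)=|g(e^{it})|^2$. Expanding $|g(e^{it})|^2=\sum_{j,k}\gamma_j\overline{\gamma_k}\,e^{i(j-k)t}$ and matching Fourier coefficients with those just computed, the coefficient of $e^{int}$ gives $\gamma_n\overline{\gamma_0}=\tfrac12$, and the constant term gives $\sum_{j=0}^n|\gamma_j|^2=1$.

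The crux is a short arithmetic--geometric mean squeeze: from $\gamma_n\overline{\gamma_0}=\tfrac12$ we get $|\gamma_0|^2+|\gamma_n|^2\ge 2|\gamma_0|\,|\gamma_n|=1$, while $|\gamma_0|^2+|\gamma_n|^2\le\sum_{j=0}^n|\gamma_j|^2=1$. So equality holds throughout, which forces $\gamma_1=\ldots=\gamma_{n-1}=0$ (their squared moduli sum to $0$) and $|\gamma_0|=|\gamma_n|=1/\sqrt2$. Therefore $T(t)=|\gamma_0+\gamma_n e^{int}|^2=1+2\,\mathrm{Re}\big(\overline{\gamma_0}\gamma_n e^{int}\big)$, whose only nonzero frequencies are $0$ and $\pm n$. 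Comparing with the expression for $T$ above, the coefficient of $e^{ikt}$ for $1\le k\le n-1$ must vanish, i.e. $c_k=0$, which is the assertion.

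The one point needing a little care is ensuring that Fej\'er's representation genuinely has $\gamma_0,\gamma_n\neq0$; but this is automatic, because $T$ has degree $n$ and the top Fourier coefficient of $|g(e^{it})|^2$ is precisely $\gamma_n\overline{\gamma_0}$. Beyond that, the argument is just bookkeeping with Fourier coefficients together with the single inequality, so I do not anticipate any serious obstacle.
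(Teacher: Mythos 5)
Your proof is correct, and it takes a genuinely different route from the paper's, even though both hinge on Fej\'er's lemma. The paper first evaluates $\mathrm{Re}\,Q$ at the $n$-th roots of $-1$, uses the vanishing of the power sums $\sum_k \om_k^j$ to force $\mathrm{Re}\,Q(\om_k)=0$ at all $n$ of these points, and only then invokes Fej\'er: the factor $g$ must vanish to order two at each $\om_k$, so $|g|^2=C|z^n+1|^2$ on the circle, and the conclusion follows from the uniqueness ``Fact'' about harmonic polynomials agreeing on $\T$. You bypass all of that: you extract the Fourier coefficients $\widehat{T}(0)=1$ and $\widehat{T}(n)=\tfrac12 c_n=\tfrac12$ directly, match them against $\widehat{T}(0)=\sum_j|\gamma_j|^2$ and $\widehat{T}(n)=\gamma_n\overline{\gamma_0}$ for the Fej\'er factor, and squeeze with $|\gamma_0|^2+|\gamma_n|^2\ge 2|\gamma_0||\gamma_n|=1=\sum_j|\gamma_j|^2$ to kill $\gamma_1,\dots,\gamma_{n-1}$; the vanishing of $c_1,\dots,c_{n-1}$ then drops out by comparing coefficients again. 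Your version is shorter and arguably more transparent: it needs no evaluation at special points, no zero-counting with multiplicities, and no appeal to the maximum principle or to the uniqueness of harmonic extensions. It also isolates the real mechanism as an extremal statement about nonnegative trigonometric polynomials --- if $|\widehat{T}(n)|=\tfrac12\widehat{T}(0)$ then $\widehat{T}(k)=0$ for $1\le k\le n-1$ --- which is slightly more general than the proposition itself. The one point that needed care, namely that $\gamma_0,\gamma_n\neq 0$ in the Fej\'er representation, you address correctly: it is automatic from $\gamma_n\overline{\gamma_0}=\tfrac12$. The argument is complete as written.
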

%%%%%%%%%%%%%%%%%%%%%
\begin{proof}
Consider the $n$-th roots of $-1$:
$$
 \om_k = e^{(2k+1) \pi i / n}\,, \quad k= 0, 1,\ldots, n-1\,.
$$
Clearly, $\mathrm{Re\,} \{1 + \om_k^n\} = 0$, $k= 0, 1,\ldots, n-1$. Hence from our assumption that $Q$ has positive real part in $\D$ we conclude that for each of these values
\begin{equation}
 \mathrm{Re\,} \{c_{1} \om_k + c_{2} \om_k^2 + \ldots + c_{n-1} \om_k^{n-1}\} \ge 0\,, \quad k= 0, 1,\ldots, n-1\,.
 \label{coeff-simple}
\end{equation}
By basic algebra, for any fixed $j$ with $1\le j\le n-1$ we have
$$
 \sum_{k=0}^{n-1} \om_k^j = e^{\pi i j/ n} \sum_{k=0}^{n-1} e^{2 k j \pi i / n}	
 = e^{\pi i j/ n} \frac{1 - e^{2 j \pi i}}{1 - e^{2 j \pi i / n}} = 0\,.
$$
Thus, summing up the terms on the left in \eqref{coeff-simple} over $k\in\{0,1,\ldots,n-1\}$, we get
$$
 \sum_{k=0}^{n-1} \mathrm{Re\,} \{c_{1} \om_k + c_{2} \om_k^2 + \ldots + c_{n-1} \om_k^{n-1}\} = \mathrm{Re\,} \sum_{j=1}^{n-1} \left\{c_{j} \sum_{k=0}^{n-1} \om_k^j \right\} = 0\,.
$$
Since every summand on the left-hand side in the above formula is non-negative in view of \eqref{coeff-simple}, all of them must be zero:
$$
 \mathrm{Re\,} \{c_1 \om_k + c_{2} \om_k^2 + \ldots + c_{n-1} \om_k^{n-1}\} = 0\,, \qquad k = 0, 1,\ldots, n-1\,,
$$
hence also
\begin{equation}
 \mathrm{Re\,} \{1+ c_1 \om_k + c_{2} \om_k^2 + \ldots + c_{n-1} \om_k^{n-1} + \om_k^n \} = 0\,, \qquad k= 0, 1,\ldots, n-1\,.
 \label{all-zero}
\end{equation}
Writing $\la=e^{i t}$, $t\in [0,2\pi]$, the function
$$
 T(t) = \mathrm{Re\,} \{1+c_{1}\la+ \ldots +c_{n-1}\la^{n-1} +\la^n\}
$$
can be viewed as a trigonometric polynomial of degree $n$ of the variable $t$. Since $T(t)\ge 0$ on $[0,2\pi]$, Fej\'er's Lemma tells us that for some coefficients $\g_0$,$\g_1$,\ldots,$\g_n$ and $\la=e^{i t}$ we have
$$
 T(t)=|(\g_0+\g_1 \la +\ldots + \g_n \la^n)^2|\,.
$$
The complex polynomial $Q(z)=(\g_0+\g_1 z +\ldots \g_n z^n)^2$ cannot be identically zero for then the trigonometric polynomial $T$ would be identically zero in $[0,2\pi]$ and then the restriction of $\mathrm{Re\,} \{1+c_{1} z+ \ldots +c_{n-1} z^{n-1} + z^n\}$ to the unit circle would be zero while its value at the origin is one, which would contradict the maximum principle or the mean value property. Therefore the polynomial $Q$ has $2n$ zeros counting the multiplicities, each zero being obviously of order
at least two. But we know from \eqref{all-zero} that this polynomial
has at least $n$ distinct zeros $\om_k$, $k= 0, 1,\ldots, n-1$, which
are roots of $-1$, so each one of these zeros must be double and hence
$Q$ cannot have any other zeros. Thus, the polynomial factorizes as
$$
 Q(z) = (\g_0+\g_1 z +\ldots + \g_n z^n)^2 = C \prod_{k=0}^{n-1} (z -
 \om_k)^2 = C (z^n+1)^2 \,.
$$
Hence
$$
 \mathrm{Re\,} \{1 + c_{1} z + \ldots + c_{n-1} z^{n-1} +
 z^n\} = |C (z^n + 1)^2| = 2 |C|\,\mathrm{Re\,} \{z^n + 1\}
$$
for all $z$ on the unit circle. From the fact quoted earlier, two
polynomials whose real parts are equal on the unit circle must coincide everywhere, except for an imaginary constant:
$$
 1 + c_{1} z + \ldots + c_{n-1} z^{n-1} + z^n = 2 |C|
 (z^n+1) + ic\,, \qquad z\in\C\,, \quad c\in\R\,.
$$
It follows that
$$
 c_1=c_2=\ldots=c_{n-1}=0\,,
$$
which proves the claim.
\end{proof}
%%%%%%%%%%%%%%%%%%%%%
\par\medskip
\textbf{A remark on simple critical points}. It will also be important to stress that if all critical points of a polynomial univalent in $\D$ are on the unit circle, then all of them are simple zeros of the derivative. This fact is known to the experts \cite[p.~105]{Br}, \cite[p.~241]{SS} but it seems useful to explain it in a few lines. In fact, if $P^\prime(\a)=0$ for some $\a$ with $|\a|=1$ and if $P^{\prime\prime} (\a)=0$, then $\a$ is a zero of order $k\ge 3$ of the polynomial $P(z)-P(\a)$, hence
$$
 P_\a (z) = P(z)-P(\a)= [(z-\a) h(z)]^k
$$
for some entire function $h$. Note that the function $g(z)=(z-\a) h(z)$ has the property that $g^\prime(\a)=h(\a)\neq 0$, hence $g$ is conformal at the point $\a$. Now the basic local mapping properties can be used to contradict the univalence of $P_\a$.
%%%%%%%%%%%%%%%%%%%%%
%%%%%%%%%%%%%%%%%%%%%%%%%%%
\section{The main result and its proof}
 \label{sec-main}
%%%%%%%%%%%%%%%%%%%%%%%%%%%
\par
We are now ready to prove our main result. It can be viewed as an extension of Brannan's result by a completely different and possibly simpler method. In order to prove the equivalence between (b) and (c) below in \cite{Br}, Brannan used Dieudonn\'e's criterion and another lemma on univalent polynomials proved by himself in an earlier paper. As far as we know, the equivalence between (a) and the remaining two conditions is new.
%%%%%%%%%%%%%%%%%%%%%
\begin{thm}\label{gen-brennan}
Let $P(z)=z+a_2 z^2+\ldots + a_{n-1}z^{n-1}+a_n z^n$ be a polynomial in the class $S$ with all critical points on the unit circle (that is, $|a_n|=\frac{1}{n}$). Then the following statements are equivalent:
\begin{itemize}
 \item[(a)]
$P^\prime$ has positive real part in $\D$.
 \item[(b)]
$a_2=\ldots=a_{n-1}=0$.
 \item[(c)]
$P$ is starlike.
\end{itemize}
\end{thm}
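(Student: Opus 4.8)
The plan is to normalize via a rotation and then to establish the two equivalences $(a)\Leftrightarrow(b)$ and $(b)\Leftrightarrow(c)$. As explained in the Introduction, replacing $P$ by $P_\lambda(z)=\overline{\lambda}\,P(\lambda z)$ with $\lambda^{n-1}=1/(na_n)$ preserves (a), (b), (c) and multiplies each $a_k$ by a unimodular power of $\lambda$, so I may assume $a_n=1/n$; moreover $n=2$ is trivial, since then (b) is vacuous and $z+z^2/2$ satisfies all three conditions, so I take $n\ge3$. The implication $(a)\Rightarrow(b)$ is handled by Proposition~\ref{prop-pos}: with $a_n=1/n$ we have $P'(z)=1+2a_2z+3a_3z^2+\cdots+(n-1)a_{n-1}z^{n-2}+z^{n-1}$, which is of the form treated there (with $n$ replaced by $n-1\ge2$ and $c_j=(j+1)a_{j+1}$), so $\mathrm{Re}\,P'>0$ in $\D$ forces $a_2=\cdots=a_{n-1}=0$. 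The implication $(b)\Rightarrow(a)$ is immediate, since $P'(z)=1+z^{n-1}$ has real part at least $1-|z|^{n-1}>0$; and $(b)\Rightarrow(c)$ is a short computation: $zP'(z)/P(z)=(1+w)/(1+w/n)$ with $w=z^{n-1}$, whose real part, after clearing the denominator, has numerator $1+|w|^2/n+(1+1/n)\,\mathrm{Re}\,w\ge(1-|w|)(n-|w|)/n>0$ on $\D$, so the starlikeness criterion quoted in the Introduction (together with $P(0)=P'(0)-1=0$ and $P\ne0$ on $\D\setminus\{0\}$) gives $P\in S^*$. Thus everything reduces to $(c)\Rightarrow(b)$.

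So assume $P$ is starlike. Since $P$ is univalent with $P(0)=0$, it does not vanish on $\overline{\D}\setminus\{0\}$, and therefore for $|z|=1$
\[
 \mathrm{Re}\{zP'(z)\,\overline{P(z)}\}=|P(z)|^2\,\mathrm{Re}\frac{zP'(z)}{P(z)}\ge0 .
\]
Restricted to the circle, $t\mapsto\mathrm{Re}\{e^{it}P'(e^{it})\overline{P(e^{it})}\}$ is a nonnegative trigonometric polynomial of degree exactly $n-1$ (the coefficient of $e^{\pm i(n-1)t}$ is a nonzero multiple of $a_n$), and it vanishes at each of the $n-1$ critical points $\alpha_1,\dots,\alpha_{n-1}$ of $P$ on the circle, which are \emph{pairwise distinct} by the Remark on simple critical points. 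By Fej\'er's Lemma this polynomial equals $|G(z)|^2$ on $|z|=1$ for some polynomial $G$ with $\deg G\le n-1$; then $G(\alpha_k)=0$ for every $k$, so $G(z)=c\prod_k(z-\alpha_k)=c\,P'(z)$ for some constant $c$, which gives
\[
 \mathrm{Re}\{zP'(z)\,\overline{P(z)}\}=|c|^2\,|P'(z)|^2 \qquad(|z|=1),
\]
with $c\ne0$ (otherwise $\mathrm{Re}(zP'/P)$, which is harmonic and positive in $\D$, would vanish identically on $\partial\D$).

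It remains to turn this circle identity into a polynomial identity. Let $P^\sharp(z)=\sum_{k=1}^n\overline{a_k}\,z^{n-k}$ be the conjugate-reverse of $P$, so that $\overline{P(z)}=z^{-n}P^\sharp(z)$ on $|z|=1$; because $P'$ has all its roots on the unit circle and $P'(0)=1$, writing $P'(z)=\prod_k(z-\alpha_k)$ and using $1/\overline{\alpha_k}=\alpha_k$ shows that $P'$ coincides with its own conjugate-reverse, so $\overline{P'(z)}=z^{1-n}P'(z)$ on $|z|=1$. Substituting these into the identity above, and cancelling the (not identically zero) factor $P'$, one arrives at
\[
 P^\sharp(z)+\frac{P(z)}{z}=2|c|^2\,P'(z),\qquad z\in\C .
\]
Comparing coefficients of $z^0$ gives $2|c|^2=(n+1)/n$, while comparing coefficients of $z^m$ for $1\le m\le n-2$ gives $\overline{a_{n-m}}=g(m)\,a_{m+1}$ with $g(m)=\big((n+1)m+1\big)/n$. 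Applying this relation at $m$ and at $n-1-m$ yields $\overline{a_{n-m}}\big(1-g(m)g(n-1-m)\big)=0$; since $g(m)g(n-1-m)=u_mv_m/n^2$ with $u_m+v_m=n^2+1$ constant, $u_mv_m$ attains its minimum over $1\le m\le n-2$ at $u_mv_m=(n+2)(n^2-n-1)=n^2+(n-2)(n+1)^2>n^2$ for $n\ge3$, so $g(m)g(n-1-m)>1$ and hence $a_{n-m}=0$. Letting $m$ run over $1,\dots,n-2$ gives $a_2=\cdots=a_{n-1}=0$, which is (b).

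The main obstacle is precisely this passage from the \emph{geometric} starlikeness condition to an \emph{exact} polynomial identity. It requires (i) rewriting the starlikeness inequality as a nonnegative trigonometric polynomial so that Fej\'er's Lemma is available, (ii) using the \emph{simplicity} of the critical points — so that they are distinct — to pin the Fej\'er factor down to a scalar multiple of $P'$, and (iii) the bookkeeping with conjugate-reverse and self-inversive polynomials needed to promote a circle identity to an identity on $\C$. Once the identity for $P^\sharp+P/z$ is in hand, (b) follows from a short and essentially forced comparison of coefficients, and all the other implications are elementary.
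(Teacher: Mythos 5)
Your proposal is correct, and it follows the paper's architecture almost exactly: the rotation normalization, Proposition~\ref{prop-pos} for (a)$\Rightarrow$(b), the direct computations for (b)$\Rightarrow$(a) and (b)$\Rightarrow$(c), and, for (c)$\Rightarrow$(b), the passage from $\mathrm{Re}\,\{zP'(z)\overline{P(z)}\}\ge 0$ on $\T$ through Fej\'er's Lemma and the simplicity of the critical points to the identity $\mathrm{Re}\,\{zP'\overline{P}\}=|c|^2|P'|^2$ on the circle. Where you diverge is the endgame. The paper stays on the circle: it views $\mathrm{Re}\,\{P'\,\overline{(P/z-|C|^2P')}\}$ as an identically vanishing trigonometric polynomial and reads off only two Fourier coefficients, the top one giving $2|C|^2=1+\tfrac1n$ and the constant one giving $\sum_{k=2}^{n-1}k(1-k|C|^2)|a_k|^2=0$, a sum of one-signed terms, so all $a_k$ vanish at once. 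You instead promote the circle identity to the polynomial identity $P^\sharp(z)+P(z)/z=2|c|^2P'(z)$ via the conjugate-reverse and the self-inversiveness of $P'$, then compare every coefficient to get the linear relations $\overline{a_{n-m}}=g(m)a_{m+1}$, and close the loop by pairing $m$ with $n-1-m$ and checking $g(m)g(n-1-m)>1$. Both finishes are sound (I verified your identity, the value $2|c|^2=(n+1)/n$, and the product bound $(n+2)(n^2-n-1)>n^2$ for $n\ge3$); the paper's is shorter because a single positivity observation replaces your pairing computation, while yours yields the full system of coefficient constraints as a byproduct and makes explicit the self-inversive structure that the paper leaves implicit. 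One small remark: your parenthetical justification that $c\ne 0$ is not quite airtight as stated (it tacitly assumes $P\ne 0$ on $\partial\D$), but it is also unnecessary, since the constant-coefficient comparison already forces $2|c|^2=(n+1)/n\ne 0$.
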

%%%%%%%%%%%%%%%%%%%%%
\par\smallskip
Note that $P$ has any of the properties (a)--(c) if and only if any of its rotations $P_\la(z)=\overline{\la} P(\la z)$, $|\la|=1$, has the corresponding property. Thus, without loss of generality we may assume that $P(z)=z+a_2 z^2+\ldots + a_{n-1}z^{n-1}+\frac{z^n}{n}$. We proceed under this assumption.
\par\smallskip
We first show that (a)$\Leftrightarrow$(b) and then also that (b)$\Leftrightarrow$(c).
\par\smallskip
%%%%%%%%%%%%%%%%%%%%%
\begin{proof}
\fbox{(a)$\Rightarrow$(b)} \ Follows directly from Proposition~\ref{prop-pos} since
$$
 P^\prime (z)=1+2a_2 z+\ldots + (n-1)a_{n-1}z^{n-2}+z^{n-1}\,.
$$
\par\smallskip
\fbox{(b)$\Rightarrow$(a)} \ In this case, $P^\prime (z)=1+z^{n-1}$, and the conclusion is clear.
\par\smallskip
\fbox{(b)$\Rightarrow$(c)} \ It is straightforward to check that
$$
 Q(z) = \frac{z P^\prime(z)}{P(z)} = \frac{1+z^{n-1}}{1+ \frac{z^{n-1}}{n}}
$$
has positive real part in $\D$ since the linear fractional mapping  $\frac{1+w}{1+\frac{w}{n}}$ maps the unit disk onto a disk whose  diameter is $(0,\frac{2n}{n+1})$.
\par\smallskip
\fbox{(c)$\Rightarrow$(b)} \ By the well-known criterion for starlikeness, $P$ must have the property that
$$
 \mathrm{Re\,} \left\{ \frac{z P^\prime(z)}{P(z)} \right\} > 0 \quad
 \mathrm{for \ all \ } z\in\D\,.
$$
After multiplying both sides by $|P(z)|^2$, it follows that
$$
 \mathrm{Re\,} \left\{ z P^\prime(z) \overline{P(z)} \right\} \ge 0
$$
for all $z\in\D$ and hence also for $z\in\overline{\D}$. Thus, if we denote by $R$ the polynomial defined by
$$
 R(z) = \frac{P(z)}{z} = 1+a_2 z + \ldots + a_{n-1}z^{n-2} +\frac{z^{n-1}}{n}\,,
$$
we conclude that
$$
 \mathrm{Re\,} \left\{ P^\prime(z) \overline{R(z)} \right\} \ge 0\,,
$$
for all $z$ on the unit circle $\T=\partial\D$. Since the restriction of $\mathrm{Re\,} \{P^\prime \overline{R}\}$ to the unit circle is a trigonometric polynomial, the Fej\'er lemma yields
$$
 \mathrm{Re\,} \left\{ P^\prime(z) \overline{R(z)} \right\} = |Q(z)|^2  \quad  \mathrm{for \ all \ } z\in\T\,,
$$
where $Q(z)=\g_0+\g_1 z+\ldots +\g_{n-1} z^{n-1}$. Now, the zeros of $P^\prime$ all lie on $\T$ by assumption and are pairwise different (as observed earlier), so there are $n-1$ of them. On the other hand, $Q$ viewed as a polynomial in the complex plane must vanish at each zero of $P^\prime$ and has $n-1$  zeros. Hence the zeros of $Q$  must all coincide with the zeros of $P^\prime$ on $\T$, and it follows that actually in the whole plane we have $Q(z) = C P^\prime (z)$ for some constant $C$. Hence
$$
 \mathrm{Re\,} \left\{ P^\prime(z) \overline{R(z)} \right\} = |C|^2  |P^\prime (z)|^2 \quad  \mathrm{for \ all \ } z\in\T\,.
$$
In other words,
$$
 \mathrm{Re\,} \left\{ P^\prime(z) \overline{(R(z) - |C|^2 P^\prime (z))} \right\} = 0 \quad  \mathrm{for \ all \ } z\in\T\,.
$$
After writing down the expressions for both factors on the left-hand side, $P ^\prime$ and $\overline{(R - |C|^2 P^\prime)}$, and multiplying out, we obtain a trigonometric polynomial of the form
$$
 \a_0+\sum_{k=1}^{n-1} (\a_k \cos k t + \b_k \sin k t)
$$
whose all coefficients are zero. There is no need to compute all of them: it suffices to focus just on $\a_0$ and $\a_{n-1}$. One easily notices that
\begin{equation}
 \a_{n-1} = 1+\frac{1}{n}-2|C|^2=0\,.
 \label{eq-const}
\end{equation}
and
$$
 \a_0 = (1-|C|^2) + \sum_{k=2}^{n-1} k (1-k|C|^2) |a_k|^2 + \(\frac{1}{n}-|C|^2\) = 0\,.
$$
In view of \eqref{eq-const}, the first and the last term in the above sum cancel out, so we are left with
\begin{equation}
 \sum_{k=2}^{n-1} k \(1-k|C|^2\) |a_k|^2 = 0\,.
 \label{eq-var}
\end{equation}
Equation \eqref{eq-const} yields $|C|^2=(n+1)/(2n)$, which easily implies that
$$
 1-k|C|^2<0, \qquad \mathrm{for \ all \ } \ k\in\{2,3,\ldots,n-1\}\,.
$$
This, together with \eqref{eq-var}, readily implies (b).
\end{proof}
%%%%%%%%%%%%%%%%%%%%%
\par

%%%%%%%%%%%%%%%%%%%%%%%%%%%%%%%%%%%%%%%%%%%%%%%%%%%%%%%%%%%%%%%%


\begin{thebibliography}{BDL99}
%%%%%%%%%%%%%%%%%%%%%%%%%%%%%%%%%%%%%%%%%%%%%%%%%%%%%%%%%%%%%%%%

\bibitem{Br}
D.A. Brannan, On univalent polynomials, \textit{Glasgow Math. J.} \textbf{11} (1970), 102--107.

\bibitem{Di}
J. Dieudonn\'e, Sur le rayon d'univalence des polyn\^{o}mes, \textit{C. R. Sci. Acad. Paris} \textbf{192} (1931), 79--81.

\bibitem{Du}
P.L. Duren, \textit{Univalent Functions\/}, Springer-Verlag, New~York 1983.

\bibitem{MSUV}
M.J. Martín, E.T. Sawyer, I. Uriarte-Tuero, D. Vukoti\'c, The Krzy\.{z} conjecture revisited, \textit{Adv. Math.}, \textbf{273} (2015), 716--745.

\bibitem{P}
Ch. Pommerenke, \textit{Univalent Functions\/}, Vandenhoeck \& Ruprecht, G\"{o}ttingen 1975.

\bibitem{SS}
T. Sheil-Small, \emph{Complex Polynomials}, Cambridge studies in advanced mathematics \textbf{75}, Cambridge University Press, Cambridge  2002.

\bibitem{Ts}
M. Tsuji, \textit{Potential Theory in Modern Function Theory}, Maruzen, Tokyo 1959.

\end{thebibliography}
\end{document}